\def\vec{{\bf{v}}}
\newcommand{\End}{\mathrm{End}}
\newcommand{\CC}{\mathbb{C}}  
\newcommand{\id}{\mathrm{id}}  
\newcommand{\rank}{\mathrm{rank}}
\newtheorem{thm}{Theorem}[section]
\newtheorem{prop}[thm]{Proposition}
\newtheorem{cor}[thm]{Corollary}
\theoremstyle{definition}
\DeclareMathAlphabet\eurm{U}{eur}{m}{n}
\SetMathAlphabet\eurm{bold}{U}{eur}{b}{n}
\def\upp#1{\leavevmode\raise.31ex\hbox{#1}}
\title{The planar rook algebra and Pascal's  triangle}
\author{Daniel Flath, Tom Halverson, and Kathryn Herbig
\thanks{Halverson and Herbig were supported in part by National Science Foundation Grant DMS-0100975.}}
\begin{document}

\maketitle


Surely the best known recursively defined integers are the binomial coefficients $\binom{n}{k}=\binom{n-1}{k-1} + \binom{n-1}{k}$ appearing in Pascal's triangle.
They admit many interpretations, the two most well known of which are 
\begin{enumerate}
\item Algebraic:   Expansion of $(x+y)^n$ with recursion  $$(x+y)^n =( x+y)^{n-1}(x+y).$$
\item Combinatorial:  Number of subsets of $[1, n]= \{ 1, 2, \ldots, n\}$  with recursion $$[1, n]= [1, n-1] \cup \{n\}.$$
\end{enumerate}

In this paper we propose a linear algebra interpretation rooted in representation theory.   We construct natural vector spaces $V^n_k$ with $\dim V^n_k= \binom{n}{k}$ and direct sum decompositions $V^n_k = V^{n-1}_{k-1}\oplus V^{n-1}_k.$
The vector spaces are natural in the sense that the $V^n_k$ are the spaces of all the distinct irreducible representations of an algebra $\CC P_n$, the {\em planar rook algebra}.  The direct sums describe decomposition upon restriction arising from an embedding $\CC P_{n-1}\rightarrow \CC P_n$.  Even the multiplicative structure of the binomial coefficients arises from the representation theory of the planar rook algebras, as we discover upon decomposition of tensor product representations.

The planar rook algebra is an example of a ``diagram algebra," which for our purposes is a finite-dimensional algebra with a basis given by a collection of diagrams and multiplication described combinatorially by diagram concatenation.  When the basis diagrams can be drawn without edge crossings, we get a planar  algebra.  There is a growing theory of planar algebras initiated by V. Jones (see \cite{Jo})  that uses a more refined definition of planarity than what we give here.   

The main goal of this paper is to work out the combinatorial representation theory of the planar rook algebra $\CC P_n$ and to show that it is governed by the theory of binomial coefficients.  
The following are the main results:
\begin{enumerate}
\item A classification of the irreducible $\CC P_n$ modules (Theorems  \ref{ModelDecomposition}  and  \ref{thm:regularrepresentation}).
\item  An explicit decomposition of the regular representation of $\CC P_n$ into a direct sum of irreducibles (Theorem \ref{thm:regularrepresentation}).
\item A computation of the Bratelli diagram for the tower of algebras $\CC P_0 \subseteq \CC P_1 \subseteq \CC P_2 \subseteq \cdots$ (Section 4). 
\item A computation of the irreducible characters, Theorem  \ref{thm:chars}, for $\CC P_n$. 
\end{enumerate}

\section{The Planar Rook Monoid}

Let $R_n$ denote the set of $n \times n$ matrices with entries from $\{0,1\}$ having {\it at most} one 1 in each row and in each column.   For example, the set of all matrices in $R_2$ is
$$
R_2 = \left\{  
\begin{pmatrix} 0 & 0 \\ 0 & 0 \\ \end{pmatrix},
\begin{pmatrix} 1 & 0 \\ 0 & 0 \\ \end{pmatrix},
\begin{pmatrix} 0 & 1 \\ 0 & 0 \\ \end{pmatrix},
\begin{pmatrix} 0 & 0 \\ 1 & 0 \\ \end{pmatrix},
\begin{pmatrix} 0 & 0 \\ 0 & 1 \\ \end{pmatrix},
\begin{pmatrix} 1 & 0 \\ 0 & 1 \\ \end{pmatrix},
\begin{pmatrix} 0 & 1\\ 1& 0 \\ \end{pmatrix}
\right\}.
 $$
We call these ``rook matrices," since the 1s  correspond to the possible placement of non-attacking rooks on an $n \times n$ chessboard.  The {\it rank} of a rook matrix is the number of 1s in the matrix, and so to construct a rook matrix of rank $k$, we choose $k$ rows and $k$ columns in $\binom{n}{k}^2$ ways and then we place the 1s in $k!$ ways.  Thus the cardinality of $R_n$ is given by
$$
\vert R_n \vert = \sum_{k=0}^n  \binom{n}{k}^2 k!.
$$
We let $R_0 = \{ \emptyset \}$. 
There is no known closed formula for the sequence $\vert R_n\vert, n\ge 0$,  which begins $1, 2, 7, 34, 209, 1546, 13327,  \ldots$ (see \cite[A002720]{OEIS}).  The set $R_n$ contains the identity matrix and is closed under matrix multiplication, so $R_n$ is a  monoid (a set with an associative binary operation and an identity, but where elements are not necessarily invertible). The invertible matrices in $R_n$ are the permutation matrices (having rank $n$); they form a subgroup isomorphic to the symmetric group $S_n \subseteq R_n$.  

We associate each element of $R_n$ with a {\it rook diagram}, which is a graph on two rows of $n$ vertices, such that vertex $i$ in the top row is connected to vertex $j$ in the bottom row if and only if the corresponding matrix has a 1 in the $(i,j)$-position.  For example in $R_6$ we have
$$
{\beginpicture
\setcoordinatesystem units <0.4cm,0.18cm>         
\setplotarea x from 1 to 4, y from 0 to 3    
\linethickness=0.5pt                          
\put{$\bullet$} at 1 -1 \put{$\bullet$} at 1 2
\put{$\bullet$} at 2 -1 \put{$\bullet$} at 2 2
\put{$\bullet$} at 3 -1 \put{$\bullet$} at 3 2
\put{$\bullet$} at 4 -1 \put{$\bullet$} at 4 2
\put{$\bullet$} at 5 -1 \put{$\bullet$} at 5 2
\put{$\bullet$} at 6 -1 \put{$\bullet$} at 6 2
\plot 1 2 2 -1  /
\plot 2 2 4 -1  /
\plot 4 2 3 -1  /
\plot 5 2 6 -1  /
\endpicture}
\quad \leftrightarrow \quad
\begin{pmatrix} 
0 & 1 & 0 & 0 & 0 & 0  \\
0 & 0 & 0 & 1 & 0 & 0  \\
0 & 0 & 0 & 0 & 0 & 0  \\
0 & 0 & 1 & 0 & 0 & 0  \\
0 & 0 & 0 & 0 & 0 & 1  \\
0 & 0 & 0 & 0 & 0 & 0  \\
 \end{pmatrix}.
$$
Matrix multiplication is accomplished on diagrams $d_1$ and $d_2$ by placing $d_1$ above $d_2$ and identifying the vertices in the bottom row of $d_1$ with the corresponding vertices in the top
row of $d_2$ (i.e., connecting the dots).   For example,
$$ 
\begin{array}{l}
d_1 = {\beginpicture
\setcoordinatesystem units <0.3cm,0.15cm>         
\setplotarea x from 1 to 5, y from 0 to 3    
\linethickness=0.5pt                          
\put{$\bullet$} at 1 -1 \put{$\bullet$} at 1 2
\put{$\bullet$} at 2 -1 \put{$\bullet$} at 2 2
\put{$\bullet$} at 3 -1 \put{$\bullet$} at 3 2
\put{$\bullet$} at 4 -1 \put{$\bullet$} at 4 2
\put{$\bullet$} at 5 -1 \put{$\bullet$} at 5 2
\plot 1 -1 3 2  /
\plot 2 -1 4 2  /
\plot 4 -1 2 2  /
\plot 3 -1 5 2 /
\endpicture}\\
d_2 = {\beginpicture
\setcoordinatesystem units <0.3cm,0.15cm>         
\setplotarea x from 1 to 5, y from 0 to 3    
\linethickness=0.5pt                          
\put{$\bullet$} at 1 -1 \put{$\bullet$} at 1 2
\put{$\bullet$} at 2 -1 \put{$\bullet$} at 2 2
\put{$\bullet$} at 3 -1 \put{$\bullet$} at 3 2
\put{$\bullet$} at 4 -1 \put{$\bullet$} at 4 2
\put{$\bullet$} at 5 -1 \put{$\bullet$} at 5 2
\plot 2 -1  1 2 /
\plot 3 -1 4 2  /
\plot 4 -1 5 2 /
\endpicture}
\end{array}
\ = \ \ 
{\beginpicture
\setcoordinatesystem units <0.3cm,0.15cm>         
\setplotarea x from 1 to 5, y from 0 to 3    
\linethickness=0.5pt                          
\put{$\bullet$} at 1 -1 \put{$\bullet$} at 1 2
\put{$\bullet$} at 2 -1 \put{$\bullet$} at 2 2
\put{$\bullet$} at 3 -1 \put{$\bullet$} at 3 2
\put{$\bullet$} at 4 -1 \put{$\bullet$} at 4 2
\put{$\bullet$} at 5 -1 \put{$\bullet$} at 5 2
\plot 3 -1 2 2 /
\plot 2 -1 3 2 /
\endpicture} = d_1 d_2
$$
is the diagrammatic representation of the matrix product,
$$
\left(\begin{array}{cccccccc}
0&0&0&0&0\\
0&0&0&1&0\\
1&0&0&0&0\\
0&1&0&0&0\\
0&0&1&0&0\\
\end{array}\right)
\left(\begin{array}{cccccccc}
0&1&0&0&0\\
0&0&0&0&0\\
0&0&0&0&0\\
0&0&1&0&0\\
0&0&0&1&0\\
\end{array}\right) 
=
\left(\begin{array}{cccccccc}
0&0&0&0&0\\
0&0&1&0&0\\
0&1&0&0&0\\
0&0&0&0&0\\
0&0&0&0&0\\
\end{array}\right).
$$

We say that an element of $R_n$ is {\it planar} if its diagram can be drawn (keeping inside of the rectangle formed by its vertices) without any edge crossings. We let $P_n\subseteq R_n$ denote the set of planar elements of $R_n$.  Our $R_6$ example is not planar. In the multiplication example above, the diagram $d_2$ is planar and  $d_1$ is not. Below are a few more examples of elements in $P_5$ (of rank 4, 2, 0, and 5, respectively). The fourth diagram is the identity in $P_5 \subseteq R_5$, and the third corresponds to the matrix of all 0s.
$$
{\beginpicture
\setcoordinatesystem units <0.3cm,0.15cm>         
\setplotarea x from 1 to 5, y from -1 to 2    
\linethickness=0.5pt                          
\put{$\bullet$} at 1 -1 \put{$\bullet$} at 1 2
\put{$\bullet$} at 2 -1 \put{$\bullet$} at 2 2
\put{$\bullet$} at 3 -1 \put{$\bullet$} at 3 2
\put{$\bullet$} at 4 -1 \put{$\bullet$} at 4 2
\put{$\bullet$} at 5 -1 \put{$\bullet$} at 5 2
\plot 1 -1  1 2 /
\plot 2 -1  3 2 /
\plot 3 -1 4 2  /
\plot 5 -1 5 2 /
\endpicture}\qquad
{\beginpicture
\setcoordinatesystem units <0.3cm,0.15cm>         
\setplotarea x from 1 to 5, y from -1 to 2    
\linethickness=0.5pt                          
\put{$\bullet$} at 1 -1 \put{$\bullet$} at 1 2
\put{$\bullet$} at 2 -1 \put{$\bullet$} at 2 2
\put{$\bullet$} at 3 -1 \put{$\bullet$} at 3 2
\put{$\bullet$} at 4 -1 \put{$\bullet$} at 4 2
\put{$\bullet$} at 5 -1 \put{$\bullet$} at 5 2
\plot 2 -1 1 2  /
\plot 3 -1 5 2  /
\endpicture} \qquad
{\beginpicture
\setcoordinatesystem units <0.3cm,0.15cm>         
\setplotarea x from 1 to 5, y from -1 to 2    
\linethickness=0.5pt                          
\put{$\bullet$} at 1 -1 \put{$\bullet$} at 1 2
\put{$\bullet$} at 2 -1 \put{$\bullet$} at 2 2
\put{$\bullet$} at 3 -1 \put{$\bullet$} at 3 2
\put{$\bullet$} at 4 -1 \put{$\bullet$} at 4 2
\put{$\bullet$} at 5 -1 \put{$\bullet$} at 5 2
\endpicture} \qquad
{\beginpicture
\setcoordinatesystem units <0.3cm,0.15cm>         
\setplotarea x from 1 to 5, y from -1 to 2    
\linethickness=0.5pt                          
\put{$\bullet$} at 1 -1 \put{$\bullet$} at 1 2
\put{$\bullet$} at 2 -1 \put{$\bullet$} at 2 2
\put{$\bullet$} at 3 -1 \put{$\bullet$} at 3 2
\put{$\bullet$} at 4 -1 \put{$\bullet$} at 4 2
\put{$\bullet$} at 5 -1 \put{$\bullet$} at 5 2
\plot 1 -1 1 2  /
\plot 2 -1 2 2  /
\plot 3 -1 3 2  /
\plot 4 -1 4 2  /
\plot 5 -1 5 2  /
\endpicture} = \id.
$$
It is easy to see (by drawing diagrams) that the product of two planar rook diagrams is again planar, so $P_n$ also forms a submonoid of $R_n$. The {\it only} invertible (rank $n$) planar rook diagram is the identity $\id$.

To construct a planar rook diagram of rank $k$, we choose $k$ vertices from each row. Then there is exactly one non-crossing way to connect them. Thus there are ${n \choose k}^2$ planar rook diagrams of rank $k$, and the number of planar rook diagrams is
\begin{equation*}
\vert P_n \vert = \sum_{k=0}^n {n \choose k}^2  = {2 n \choose n},
\end{equation*}
where we will let $P_0 = \{\emptyset\}$.
The last equality above is a well-known binomial identity. To see it in this setting,  choose any $n$ of the $2n$ vertices in the rook diagram. Let $k$ be the number of these chosen vertices that are in the top row (thus there are $n-k$ in the bottom row).  Connect (in {\it the} one and only non-crossing way) the $k$ chosen chosen vertices from the top row to the $k$ {\it not} chosen vertices from the bottom row.  

The algebraic properties of the rook monoid are studied in \cite{So}, \cite{Gr}, \cite{Re}, and \cite{Ha}.  The planar rook monoid appears in \cite{Re} as the order preserving ``partial permutations" of $\{1, 2, \ldots, n\}$,  and some combinatorics of $P_n$ arise in \cite{HL}.

\section{Planar Rook Diagrams Acting on Sets}

In this section, we construct $n \choose k$ dimensional vector spaces $V^n_k$, for  $0 \le k \le n$, and we will define a natural action (as linear transformations) of $P_n$ on these vector spaces. In this way, we can homomorphically represent multiplication in $P_n$ by multiplication of ${n \choose k}\times {n \choose k}$ matrices. Furthermore, we show that these matrix representations are all different, are irreducible, and include all the irreducible representations of $P_n$.

For a planar rook diagram $d$, let $\tau(d)$ and $\beta(d)$ denote the vertices in the top and bottom rows of $d$, respectively, that are incident  to an edge.  For example, 
$$
\hbox{if} \quad
d = {\beginpicture
\setcoordinatesystem units <0.25cm,0.12cm>         
\setplotarea x from 1 to 5, y from 0 to 3    
\linethickness=0.5pt                          
\put{$\bullet$} at 1 -1 \put{$\bullet$} at 1 3
\put{$\bullet$} at 2 -1 \put{$\bullet$} at 2 3
\put{$\bullet$} at 3 -1 \put{$\bullet$} at 3 3
\put{$\bullet$} at 4 -1 \put{$\bullet$} at 4 3
\put{$\bullet$} at 5 -1 \put{$\bullet$} at 5 3
\plot 1 -1 2 3  /
\plot 2 -1 3 3  /
\plot 5 -1 4 3  /
\endpicture} 
\quad \hbox{then}\quad
\beta(d) = \{1,2,5\}
\hbox{ and } \tau(d) = \{2,3,4\}.
$$
The sets $\tau(d)$ and $\beta(d)$ uniquely determine $d$ since there is only one planar way to connect the vertices by edges.  We can view $d$ as a 1-1 function with domain $\beta(d)$ and codomain $\tau(d).$   So, in our  example, $d(1) = 2, d(2)=3$, and $d(5)=4$.  

Now consider a subset $S = \{s_1, \ldots, s_k\}$ of order $k$ chosen from the set $\{1, 2, \ldots, n\}$.  If $d \in P_n$ and if $S$  is a subset of the domain $\beta(d)$ of $d$, then we can define an action of $d$ on $S$ by $d(S) = \{ d(s_1), d(s_2), \ldots, d(s_k)\}$.  Notice that $d(S)$ and $S$ have the same cardinality.

There are $2^n$ subsets of $\{1, 2, \ldots, n\}$, and we define a vector space $V^n$ over $\CC$ with dimension $2^n$ having a basis $\{\vec_S\}$ labeled by these subsets $S \subseteq \{1, \ldots, n\}$. Thus
\begin{equation}\label{setrep}
V^n = \CC\hbox{-span} \left\{ \ \vec_S \ \big\vert \  S \subseteq\{1, \ldots, n\} \ \right\}.
\end{equation}
We define an action of $P_n$ on $V^n$ as follows. 
For $d \in P_n$ and $S \subseteq \{1, \ldots, n\}$, define 
\begin{equation}
\label{actiononsets}
d\vec_S  = \begin{cases}
 \vec_{d(S)}, & \text{if $S \subseteq \beta(d)$, } \\
0, & \text{otherwise}. \\
\end{cases}
\end{equation}
This defines an action of $d$ on the basis of $V^n$ which we then extend linearly to all of $V^n$.
To illustrate with some examples, if we again let $
d = {\beginpicture
\setcoordinatesystem units <0.25cm,0.11cm>         
\setplotarea x from 1 to 5, y from 0 to 3    
\linethickness=0.5pt                          
\put{$\bullet$} at 1 -1 \put{$\bullet$} at 1 3
\put{$\bullet$} at 2 -1 \put{$\bullet$} at 2 3
\put{$\bullet$} at 3 -1 \put{$\bullet$} at 3 3
\put{$\bullet$} at 4 -1 \put{$\bullet$} at 4 3
\put{$\bullet$} at 5 -1 \put{$\bullet$} at 5 3
\plot 1 -1 2 3  /
\plot 2 -1 3 3  /
\plot 5 -1 4 3  /
\endpicture}$,
then $d\vec_{ \{  1,2,5 \} }  = \vec_{ \{  2, 3, 4 \} }$, $d \vec_{ \{  2, 5 \} }  = \vec_{ \{  3,4 \} }$, and $d \vec_{ \{  1,2,3 \} } = 0.$

It follows from diagram multiplication that $(d_1 d_2) \vec_S  = d_1 (d_2(\vec_S))$.   This means that $V^n$ is a ``module" for $P_n$. The map from $P_n$ to the set $\End(V^n)$ of linear transformations on $V^n$ is an injective monoid homomorphism.

For $0 \le k \le n$  consider the subspace of $V^n$ spanned by subsets of cardinality $k$,
\begin{equation}
\label{subsetrep}
V^n_k = \CC\hbox{-span} \left\{ \ \vec_S \ \big\vert \  S \subseteq\{1, \ldots, n\} \hbox{ and } \vert S \vert = k \ \right\}.
\end{equation}
Since the action of $P_n$ preserves the size of the subset (or sends it to the zero vector) we see that the $V^n_k$ are $P_n$-invariant submodules.
The following theorem describes the structure of $V^n$ as a module for $P_n$.

\begin{thm} 
\label{ModelDecomposition}  For all $n \ge 0$ and $0 \le k \le n$, we have
\begin{enumerate}
\item[(a)] $V^n_k$ is a $P_n$-module, and the $V^n_k$ are non-isomorphic for different $k$. 
\item[(b)] $V^n_k$ is irreducible (it contains no proper, nonzero $P_n$-invariant subspaces).  
\item[(c)]   $V^n$ decomposes as
$$
V^n \cong \bigoplus_{k=0}^n\ V^n_k,
$$
where each irreducible module appears with multiplicity 1.
\end{enumerate}
\end{thm}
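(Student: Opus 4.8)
The plan is to dispatch part (c) first, since it is almost immediate, and then to treat the substantive claims (a) and (b). For (c), I would observe that partitioning the basis $\{\vec_S\}$ of $V^n$ according to cardinality gives a vector-space decomposition $V^n = \bigoplus_{k=0}^n V^n_k$; since the action in \eqref{actiononsets} either preserves $|S|$ or sends $\vec_S$ to $0$, each summand is $P_n$-invariant, so this is genuinely a decomposition of $P_n$-modules. Granting (a) and (b), each summand is then irreducible and no two summands are isomorphic, so every irreducible constituent occurs with multiplicity exactly $1$, which is the content of (c).

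For the irreducibility in (b), I would use the standard strategy of showing that any nonzero invariant subspace $W \subseteq V^n_k$ must be all of $V^n_k$, and the key tool is a family of projection idempotents. For a $k$-subset $S$, let $e_S \in P_n$ be the diagram whose only edges are the vertical edges joining top vertex $i$ to bottom vertex $i$ for each $i \in S$; this is planar and satisfies $\beta(e_S) = \tau(e_S) = S$. A direct check using \eqref{actiononsets} shows that on $V^n_k$ the element $e_S$ acts as the coordinate projection $e_S \vec_T = \delta_{S,T}\,\vec_S$, because a $k$-subset $T$ is contained in the $k$-set $S$ only when $T = S$. Hence if $0 \neq v = \sum_T c_T \vec_T \in W$ with $c_S \neq 0$, then $e_S v = c_S \vec_S \in W$, so $\vec_S \in W$. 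To propagate this to every basis vector, for any other $k$-subset $T$ let $d_{T,S} \in P_n$ be the unique planar diagram with $\beta(d_{T,S}) = S$ and $\tau(d_{T,S}) = T$; then $d_{T,S}\vec_S = \vec_T$, so $\vec_T \in W$ for all $T$ and therefore $W = V^n_k$.

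The main obstacle is part (a), specifically the non-isomorphism, since the dimensions $\dim V^n_k = \binom{n}{k}$ do not separate the modules: $\binom{n}{k} = \binom{n}{n-k}$, so $V^n_k$ and $V^n_{n-k}$ have equal dimension and a crude dimension count cannot distinguish them. To obtain a genuine module invariant I would use the rank filtration of $P_n$. The observation is that a diagram $d$ of rank $r$ annihilates $V^n_k$ precisely when $r < k$: indeed $d\vec_S \neq 0$ requires a $k$-subset $S$ with $S \subseteq \beta(d)$, and since $|\beta(d)| = r$ such an $S$ exists if and only if $k \le r$. Consequently the set of elements of $P_n$ that act as $0$ on $V^n_k$ is exactly $\{\, d \in P_n : \rank(d) < k \,\}$, so the smallest rank of an element acting nonzero on $V^n_k$ is $k$. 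This set, equivalently this minimal rank, is preserved by any module isomorphism and determines $k$; hence $V^n_k \cong V^n_j$ forces $k = j$, completing (a).
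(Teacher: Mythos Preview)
Your proof is correct and matches the paper's approach: for (b) the paper uses exactly your projection-then-transport argument (your $e_S$ and $d_{T,S}$ are precisely the paper's diagrams $d$ with $\tau(d)=\beta(d)=S$ and $d'$ with $\beta(d')=S$, $\tau(d')=S'$), and for (a) the paper likewise separates the $V^n_k$ by which monoid elements act as zero, invoking the specific diagrams $\pi_\ell$ with $\ell$ left-justified vertical edges in place of your full rank-annihilator description. Part (c) is handled identically.
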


\begin{proof} (a) The fact that $V^n_k$ is a module follows from the discussion preceding the Theorem. 
Since the dimensions of these modules are binomial coefficients, the only possible isomorphism could occur between $V^n_k$ and $V^n_{n-k}$. 
The element of the form $\pi_\ell = {\beginpicture
\setcoordinatesystem units <0.3cm,0.12cm>         
\setplotarea x from 1 to 5, y from 0 to 3    
\linethickness=0.5pt                          
\put{$\bullet$} at 1 -1 \put{$\bullet$} at 1 2
\put{$\bullet$} at 2 -1 \put{$\bullet$} at 2 2
\put{$\bullet$} at 3 -1 \put{$\bullet$} at 3 2
\put{$\bullet$} at 4 -1 \put{$\bullet$} at 4 2
\put{$\bullet$} at 5 -1 \put{$\bullet$} at 5 2
\plot 1 -1  1 2  /
\plot 2 -1  2 2 /
\plot 3 -1  3 2 /
\endpicture},$ which has $\ell$ vertical edges, acts by zero on $V^n_k$, with $k > \ell$. So the set of these $\pi_\ell, 0 \le \ell \le n$, will distinguish the $V^n_k$ from one another.

(b) To show that $V^n_k$ is irreducible, suppose that $W \subseteq V^n_k$ is a $P_n$-invariant subspace, and that $0 \not= w \in W$.  We expand $w$ in the basis as $w = \sum_{\vert S \vert = k} \lambda_S v_S$, with $\lambda_S \in \CC$.  Since $w \not=0$, there must be at least one $\lambda_S \not= 0$.  Let $d \in P_n$ be the unique planar diagram with $\tau(d) = \beta(d) = S$.  Then $d w  = \lambda_S v_S$, so $v_S \in W$.   Now let $S'$ be any other subset of order $k$ and let $d' \in P_n$ be the unique planar diagram with $\beta(d') = S$ and $\tau(d') = S'$.  Then, $d' \vec_S = \vec_{d'(S)} = \vec_{S'}$, so $\vec_{S'} \in W$. This shows that all the basis vectors of $V^n_k$ must be in $W$ and so $W = V^n_k$.

(c) The fact that $V^n$ decomposes as stated follows immediately from the fact that each $\vec_S$ appears in exactly one of the $V^n_k$.  In Section 3 we will prove that these are all  of the irreducible representations by showing that these are the only representations that show up in the  regular representation of $P_n$ acting on itself by multiplication.
\end{proof}

Let $P_0 = \{\emptyset\}$ and view $P_0 \subseteq P_1 \subseteq P_2 \subseteq \cdots$  by placing a vertical edge on the right of each diagram in $P_{n-1}$,  i.e., an edge that connects the $n$th vertex in each row.  For example,
$$
{\beginpicture
\setcoordinatesystem units <0.2cm,0.15cm>         
\setplotarea x from 1 to 5, y from 0 to 3    
\linethickness=0.5pt                          
\put{$\bullet$} at 1 -1 \put{$\bullet$} at 1 2
\put{$\bullet$} at 2 -1 \put{$\bullet$} at 2 2
\put{$\bullet$} at 3 -1 \put{$\bullet$} at 3 2
\put{$\bullet$} at 4 -1 \put{$\bullet$} at 4 2
\put{$\bullet$} at 5 -1 \put{$\bullet$} at 5 2
\plot 1 -1 2 2  /
\plot 2 -1 3 2  /
\plot 4 -1 4 2  /
\endpicture}  \rightarrow 
 {\beginpicture
\setcoordinatesystem units <0.2cm,0.15cm>         
\setplotarea x from 1 to 6, y from 0 to 3    
\linethickness=0.5pt                          
\put{$\bullet$} at 1 -1 \put{$\bullet$} at 1 2
\put{$\bullet$} at 2 -1 \put{$\bullet$} at 2 2
\put{$\bullet$} at 3 -1 \put{$\bullet$} at 3 2
\put{$\bullet$} at 4 -1 \put{$\bullet$} at 4 2
\put{$\bullet$} at 5 -1 \put{$\bullet$} at 5 2
\put{$\bullet$} at 6 -1 \put{$\bullet$} at 6 2
\plot 1 -1 2 2  /
\plot 2 -1 3 2  /
\plot 4 -1 4 2  /
\plot 6 -1 6 2 /
\endpicture}.
$$
It is natural to look at the restriction of the action of $P_n$ on $V^n_k$ to the submonoid $P_{n-1}$.  To this end, we construct the following subspaces of $V^n_k$,
\begin{align*}
V^n_{k,n} &= \CC\hbox{-span} \left\{  v_S\ \big\vert \ \vert S \vert = k, n \in S\right\}, \\
\widehat V^n_{k, n} &= \CC\hbox{-span} \left\{  v_S \ \big\vert \ \vert S \vert = k, n \not\in S\right\}.
\end{align*}
If $d \in P_{n-1}$, then by the way we embed $P_{n-1}$ into $P_n$, we have $n \in \tau(d)$ and $n \in \beta(d)$.  Thus it is always the case that for a subset $S \subseteq \{1, \ldots, n\}$, we have $n \in d(S)$ if and only if $n \in S$.   This means that under the the action defined in (\ref{actiononsets}), the subspaces $V^n_{k,n}$ and $\widehat V^n_{k, n}$ are  $P_{n-1}$-invariant.  

From the point of view of $P_{n-1}$, we see that
$$
V^n_{k,n} \cong V^{n-1}_{k-1} \quad\hbox{and}\quad
\widehat V^n_{k,  n} \cong V^{n-1}_{k},
$$
since, in the first case,  we are simply ignoring the element $n$, and in the second case, the basis vectors already are of the form $\vec_S$ with $S \subseteq \{1, \ldots, n-1\}$. Thus,  the vector space $V^n_k$ is irreducible under the $P_n$ action, but it breaks up into the following direct sum of irreducible $P_{n-1}$-invariant subspaces,
\begin{equation}\label{restrictionrule}
 V^n_k  \cong V^{n-1}_{k-1} \oplus V^{n-1}_k,
\end{equation}
where we drop the $V^{n-1}_{k-1}$ if $k=0$ and drop the $V^{n-1}_k$ if $k=n$.

\section{The Planar Rook Algebra}

In this section we let $P_n$ act on itself by multiplication; this is called the {\em regular representation} of $P_n$.

 The Artin-Wedderburn theory of semisimple algebras (see for example  \cite[Ch.\ IV]{CR} or \cite[Sec.\ 5]{HR}) states that if the regular representation of an algebra decomposes into a direct sum of irreducible modules, then (1) every irreducible module of the algebra is isomorphic to a summand of the regular representation and (2) every module of the algebra is isomorphic to a direct sum of irreducible modules. With this motivation, we construct an algebra associated to $P_n$, the planar rook algebra, and show explicitly that its regular representation reduces into a direct sum of modules each isomorphic to one of the $V^n_k$.

We define $\CC P_n$ to be the $\CC$-vector space with a  basis given   by the elements of $P_n$. That is,
$$
\CC P_n = \CC\hbox{-span} \left\{ \ d \ \big\vert \  d \in P_n \ \right\} = \left\{  
\sum_{d \in P_n}  \lambda_d  d\ \big\vert \lambda_d \in \CC \ \right\}.
$$
This is the vector space of all (formal) linear combinations of planar rook diagrams, and it has dimension
equal to the cardinality $\vert P_n\vert = {2n \choose n}$.  This complex vector space $\CC P_n$  is also equipped with a multiplication given by extending linearly the multiplication of diagrams in $P_n$.  This makes $\CC P_n$ an algebra over $\CC$ which we call the {\it planar rook algebra.}

It is interesting to notice that the diagram associated to the zero matrix,  {\beginpicture
\setcoordinatesystem units <0.2cm,0.12cm>         
\setplotarea x from 1 to 6, y from 0 to 3    
\linethickness=0.5pt                          
\put{$\bullet$} at 1 -1 \put{$\bullet$} at 1 2
\put{$\bullet$} at 2 -1 \put{$\bullet$} at 2 2
\put{$\bullet$} at 3 -1 \put{$\bullet$} at 3 2
\put{$\bullet$} at 4 -1 \put{$\bullet$} at 4 2
\put{$\bullet$} at 5 -1 \put{$\bullet$} at 5 2
\put{$\bullet$} at 6 -1 \put{$\bullet$} at 6 2
\endpicture}, is a basis element in this vector space, whereas the 0 vector is the linear combination with
all the $\lambda_d = 0$.

Since $\CC P_n$ is spanned by planar rook diagrams, an element  $d \in P_n$ acts naturally on the vector space $\CC P_n$ by multiplication on the left. That is, if $d \in P_n$ and $\vec = \sum_{b \in P_n} \lambda_b b \in \CC P_n$ we have 
$$
d \vec  = d \left(\sum_{b \in P_n} \lambda_b b\right)  =   \sum_{b \in P_n} \lambda_b d b.
$$

Multiplication of planar rook diagrams has the property that rank does not go up, i.e.,
$$
\rank(d_1 d_2) \le \min( \rank(d_1), \rank(d_2) ).
$$ 
Thus if we let $X^n_k$ be the span of the diagrams with rank less than or equal to $k$, we have a tower of $P_n$-invariant subspaces $X^n_0 \subseteq X^n_1 \subseteq \cdots \subseteq X^n_n$.   These $X^n_k$  are not irreducible and they do not decompose the space $\CC P_n$ into $P_n$-invariant subspaces. To accomplish such a decomposition, we first need to change to a different  but closely related basis.

 If $d_1, d_2 \in P_n$ we say that $d_1 \subseteq d_2$ if the edges of the diagram $d_1$ are a subset of the edges of the diagram of $d_2$.  If $d_1 \subseteq d_2$, we let $\vert d_2 \setminus d_1\vert = \rank(d_2) - \rank(d_1)$ or the number of edges in $d_2$ minus the number of edges in $d_1$.   Now define
\begin{equation}
\label{def:xbasis}
x_d = \sum_{d' \subseteq d}  (-1)^{\vert d \setminus d' \vert}  d'.
\end{equation}
For example,  if
$
d = {\beginpicture
\setcoordinatesystem units <0.2cm,0.15cm>         
\setplotarea x from 1 to 5, y from 0 to 3    
\linethickness=0.5pt                          
\put{$\bullet$} at 1 -1 \put{$\bullet$} at 1 2
\put{$\bullet$} at 2 -1 \put{$\bullet$} at 2 2
\put{$\bullet$} at 3 -1 \put{$\bullet$} at 3 2
\put{$\bullet$} at 4 -1 \put{$\bullet$} at 4 2
\put{$\bullet$} at 5 -1 \put{$\bullet$} at 5 2
\plot 1 -1 2 2  /
\plot 2 -1 3 2  /
\plot 4 -1 4 2  /
\endpicture}
$
then
$$
x_{d} = 
 {\beginpicture
\setcoordinatesystem units <0.2cm,0.15cm>         
\setplotarea x from 1 to 5, y from 0 to 3    
\linethickness=0.5pt                          
\put{$\bullet$} at 1 -1 \put{$\bullet$} at 1 2
\put{$\bullet$} at 2 -1 \put{$\bullet$} at 2 2
\put{$\bullet$} at 3 -1 \put{$\bullet$} at 3 2
\put{$\bullet$} at 4 -1 \put{$\bullet$} at 4 2
\put{$\bullet$} at 5 -1 \put{$\bullet$} at 5 2
\plot 1 -1 2 2  /
\plot 2 -1 3 2  /
\plot 4 -1 4 2  /
\endpicture}
-
 {\beginpicture
\setcoordinatesystem units <0.2cm,0.15cm>         
\setplotarea x from 1 to 5, y from 0 to 3    
\linethickness=0.5pt                          
\put{$\bullet$} at 1 -1 \put{$\bullet$} at 1 2
\put{$\bullet$} at 2 -1 \put{$\bullet$} at 2 2
\put{$\bullet$} at 3 -1 \put{$\bullet$} at 3 2
\put{$\bullet$} at 4 -1 \put{$\bullet$} at 4 2
\put{$\bullet$} at 5 -1 \put{$\bullet$} at 5 2
\plot 2 -1 3 2  /
\plot 4 -1 4 2  /
\endpicture}
- {\beginpicture
\setcoordinatesystem units <0.2cm,0.15cm>         
\setplotarea x from 1 to 5, y from 0 to 3    
\linethickness=0.5pt                          
\put{$\bullet$} at 1 -1 \put{$\bullet$} at 1 2
\put{$\bullet$} at 2 -1 \put{$\bullet$} at 2 2
\put{$\bullet$} at 3 -1 \put{$\bullet$} at 3 2
\put{$\bullet$} at 4 -1 \put{$\bullet$} at 4 2
\put{$\bullet$} at 5 -1 \put{$\bullet$} at 5 2
\plot 1 -1 2 2  /
\plot 4 -1 4 2  /
\endpicture}
- {\beginpicture
\setcoordinatesystem units <0.2cm,0.15cm>         
\setplotarea x from 1 to 5, y from 0 to 3    
\linethickness=0.5pt                          
\put{$\bullet$} at 1 -1 \put{$\bullet$} at 1 2
\put{$\bullet$} at 2 -1 \put{$\bullet$} at 2 2
\put{$\bullet$} at 3 -1 \put{$\bullet$} at 3 2
\put{$\bullet$} at 4 -1 \put{$\bullet$} at 4 2
\put{$\bullet$} at 5 -1 \put{$\bullet$} at 5 2
\plot 1 -1 2 2  /
\plot 2 -1 3 2  /
\endpicture}
+
 {\beginpicture
\setcoordinatesystem units <0.2cm,0.15cm>         
\setplotarea x from 1 to 5, y from 0 to 3    
\linethickness=0.5pt                          
\put{$\bullet$} at 1 -1 \put{$\bullet$} at 1 2
\put{$\bullet$} at 2 -1 \put{$\bullet$} at 2 2
\put{$\bullet$} at 3 -1 \put{$\bullet$} at 3 2
\put{$\bullet$} at 4 -1 \put{$\bullet$} at 4 2
\put{$\bullet$} at 5 -1 \put{$\bullet$} at 5 2
\plot 4 -1 4 2  /
\endpicture}
+
 {\beginpicture
\setcoordinatesystem units <0.2cm,0.15cm>         
\setplotarea x from 1 to 5, y from 0 to 3    
\linethickness=0.5pt                          
\put{$\bullet$} at 1 -1 \put{$\bullet$} at 1 2
\put{$\bullet$} at 2 -1 \put{$\bullet$} at 2 2
\put{$\bullet$} at 3 -1 \put{$\bullet$} at 3 2
\put{$\bullet$} at 4 -1 \put{$\bullet$} at 4 2
\put{$\bullet$} at 5 -1 \put{$\bullet$} at 5 2
\plot 2 -1 3 2  /
\endpicture}
+
 {\beginpicture
\setcoordinatesystem units <0.2cm,0.15cm>         
\setplotarea x from 1 to 5, y from 0 to 3    
\linethickness=0.5pt                          
\put{$\bullet$} at 1 -1 \put{$\bullet$} at 1 2
\put{$\bullet$} at 2 -1 \put{$\bullet$} at 2 2
\put{$\bullet$} at 3 -1 \put{$\bullet$} at 3 2
\put{$\bullet$} at 4 -1 \put{$\bullet$} at 4 2
\put{$\bullet$} at 5 -1 \put{$\bullet$} at 5 2
\plot 1 -1 2 2  /
\endpicture}
-
 {\beginpicture
\setcoordinatesystem units <0.2cm,0.15cm>         
\setplotarea x from 1 to 5, y from 0 to 3    
\linethickness=0.5pt                          
\put{$\bullet$} at 1 -1 \put{$\bullet$} at 1 2
\put{$\bullet$} at 2 -1 \put{$\bullet$} at 2 2
\put{$\bullet$} at 3 -1 \put{$\bullet$} at 3 2
\put{$\bullet$} at 4 -1 \put{$\bullet$} at 4 2
\put{$\bullet$} at 5 -1 \put{$\bullet$} at 5 2
\endpicture}.
$$
Under any ordering on the planar rook diagrams that extends the partial ordering given by rank (i.e., $a$ comes before $b$ if $\rank(a) < \rank(b)$), the transition matrix from the basis $\{ d\ \vert\  d\in P_n\}$ to the set $\{x_d \ \vert \ d \in P_n\}$  is upper triangular with 1s on the diagonal. Thus  $\{x_d \ \vert \ d \in P_n\}$ is also a basis for $\CC P_n$.

This  change of basis was necessary to make the second case in  the statement in the next proposition come out to 0.  Notice how close this statement is to \eqref{actiononsets}.  We are now realizing the subset action inside of $\CC P_n$.

\begin{prop} 
\label{prop:inclusionexclusion} Let $a, d \in P_n$. Then
\begin{equation}\label{actiononx}
d x_{a}  =
 \begin{cases}
x_{da}, & \hbox{if $\tau(a) \subseteq \beta(d)$}  \\
0, &             \hbox{otherwise. } \\
\end{cases}
\end{equation}
\end{prop}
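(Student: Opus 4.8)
The plan is to expand $d x_a$ by linearity, using the definition \eqref{def:xbasis}, to get
$$d x_a = \sum_{a' \subseteq a} (-1)^{|a \setminus a'|}\, d a',$$
and then to analyze each concatenation product $da'$ combinatorially. The single governing observation is that in the product $da'$ an edge of $a'$ whose top endpoint is the vertex $i$ survives (and has its top endpoint relabeled via $d$) precisely when $i \in \beta(d)$, while an edge whose top endpoint lies outside $\beta(d)$ is destroyed. This one fact drives both cases of the statement.

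First I would treat the case $\tau(a) \subseteq \beta(d)$. Here $\tau(a') \subseteq \tau(a) \subseteq \beta(d)$ for every $a' \subseteq a$, so no edge of $a'$ is destroyed and $\rank(da') = \rank(a')$; in particular $\rank(da) = \rank(a)$. The assignment $a' \mapsto da'$ then restricts the edge set of $a$ and relabels top endpoints through $d$, which gives a rank-preserving bijection between the sub-diagrams of $a$ and the sub-diagrams of $da$, under which $|a \setminus a'| = |da \setminus da'|$. Substituting this bijection into the expansion turns $\sum_{a' \subseteq a}(-1)^{|a \setminus a'|} da'$ into $\sum_{b \subseteq da}(-1)^{|da \setminus b|} b = x_{da}$, which is the first case of the claim.

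The heart of the argument is the case $\tau(a) \not\subseteq \beta(d)$, where I expect the real work to lie. I would choose a top vertex $t \in \tau(a) \setminus \beta(d)$ and let $e$ be the unique edge of $a$ incident to $t$ (unique because $a$ is a partial bijection). Then I set up the sign-reversing involution on sub-diagrams of $a$ that toggles the presence of $e$, pairing each $a'$ not containing $e$ with $a' \cup \{e\}$. The key point is that, because $t \notin \beta(d)$, the edge $e$ is destroyed in the product, so $d(a' \cup \{e\}) = d a'$ as diagrams; meanwhile $|a \setminus (a' \cup \{e\})| = |a \setminus a'| - 1$, so the two terms carry opposite signs. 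Each pair cancels, and summing over all such pairs collapses the entire expansion to $0$.

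The main obstacle is making the combinatorics of concatenation precise enough to justify two assertions: that $a' \mapsto da'$ is genuinely a bijection onto the sub-diagrams of $da$ in the first case, and that toggling a destroyed edge leaves the product unchanged in the second. Both reduce to the fact, noted just before Theorem \ref{ModelDecomposition}, that the pair of sets $\tau(\cdot)$ and $\beta(\cdot)$ determines a planar diagram uniquely. I would therefore verify each assertion by tracking the sets $\beta(\cdot)$ and $\tau(\cdot)$ of the diagrams in question rather than arguing edge-by-edge, which keeps the bookkeeping clean.
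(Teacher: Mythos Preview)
Your proposal is correct and follows essentially the same approach as the paper: both split on whether $\tau(a)\subseteq\beta(d)$, handle the first case by the rank-preserving bijection $a'\mapsto da'$ between sub-diagrams of $a$ and of $da$, and handle the second by a sign-reversing involution that toggles the edge of $a$ at a chosen vertex $t\in\tau(a)\setminus\beta(d)$. The only cosmetic difference is that the paper packages the involution via the factorization $d=dp_t$ and shows $p_t x_a=0$, whereas you cancel the paired terms directly in the expansion of $dx_a$; the underlying combinatorics are identical.
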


\begin{proof}  If $\tau(a) \subseteq \beta(d)$ then multiplication on the left (or top) by $d$ on any $d' \subseteq a$ simply rearranges the top vertices of $d'$ to their corresponding position in $da$, and the result follows by the definition of $x_{da}$.

If $\tau(a) \not\subseteq \beta(d)$, then let $i \in \tau(a)$ such that $i \not \in \beta(d)$.  Consider the diagram $p_i$, which is the same as the identity element $\id$ except that the edge connecting the $i$th vertex in each row is removed.  For example, in $P_5$, we have
$p_4 =  {\beginpicture
\setcoordinatesystem units <0.3cm,0.12cm>         
\setplotarea x from 1 to 5, y from 0 to 3    
\linethickness=0.5pt                          
\put{$\bullet$} at 1 -1 \put{$\bullet$} at 1 2
\put{$\bullet$} at 2 -1 \put{$\bullet$} at 2 2
\put{$\bullet$} at 3 -1 \put{$\bullet$} at 3 2
\put{$\bullet$} at 4 -1 \put{$\bullet$} at 4 2
\put{$\bullet$} at 5 -1 \put{$\bullet$} at 5 2
\plot 1 -1  1 2  /
\plot 2 -1  2 2 /
\plot 3 -1  3 2 /
\plot 5 -1  5 2 /
\endpicture}.
$
 Then  $d p_i  = d$, since $i \not\in \beta(d)$, and
$$
p_i x_a = \sum_{d' \subseteq a} (-1)^{\vert a\setminus d'\vert} p_i d'  
= \sum_{d' \subseteq a  \atop i \in \tau(d')} (-1)^{\vert a \setminus d' \vert} p_i d'  +  \sum_{d' \subseteq a \atop i \not\in \tau(d')} (-1)^{\vert a \setminus d' \vert} p_i d' .
$$
Now, if $i \not \in \tau(d')$ then $p_i d'  = d'$, and if $i \in \tau(d')$ then $p_i d' $ is the same diagram as $d'$ except with the edge connected to the $i$th vertex (in the top row of $d'$) removed.  There is a bijection between $\{d' \subseteq d\ \vert \  i \in \tau(d') \}$ and $\{d' \subseteq d\  \vert i \not\in \tau(d') \}$ given by removing the edge connected to the $i$th vertex (equivalently, multiplying by $p_i$). This bijection changes the sign $(-1)^{\vert a \setminus d' \vert}$, so the two summations displayed above cancel one another giving $p_i x_a= 0$. Thus $d x_a  = d p_i x_a =  0$.
\end{proof}

For diagrams $a$ and $d$ we have $\rank(a) = \rank(da)$ if and only if $\tau(a) \subseteq \beta(d)$. Thus from  (\ref{actiononx}), we see that $d x_a = 0$ unless $\rank(a)=\rank(d a)$. It follows that the subspace
$$
W^{n,k} = \CC\hbox{-span} \left\{\  x_a \  \big\vert \ \rank(a) = k \right\},
$$
is a $P_n$-invariant subspace of $\CC P_n$.   Notice that the action of $d$ on $x_a$ in  (\ref{actiononx})  does not change the bottom row $\beta(a)$. That is, $\beta(a) = \beta(da)$ when $\tau(a) \subseteq \beta(d)$.  Thus, if we let
$$
W^{n,k}_T = \CC\hbox{-span} \left\{\  x_a \  \big\vert \ \rank(a) = k,  \beta(a) = T\ \right\},
$$
then for each such $T$, we have that $W^{n,k}_T$ is a $P_n$-invariant subspace of $W^{n,k}$ and for any subset $U$ with $\vert U \vert = \vert T \vert = k$, we have
\begin{equation}\label{twoisomorphisms}
W^{n,k}_T \cong W^{n,k}_U \cong  V^n_k \qquad\hbox{ as $P_n$-invariant subspaces of $\CC P_n$}.
\end{equation}
The last isomorphism comes from the fact that the action of $d \in P_n$ on $x_a$ in (\ref{actiononx}) is the same as the action of $d$ on $v_S$ in (\ref{actiononsets}), where $S = \tau(a)$.  

For subsets $S, T$ of $\{1, \ldots, n\}$ with $\vert S \vert = \vert T\vert$, we define
$$
x_{{}_{S,T}} = x_d, \qquad
\begin{array}{l}
 \hbox{where $d$ is the unique planar rook diagram } \\
\hbox{with $\tau(d) = S$ and $\beta(d) = T$}.\\
\end{array}
$$
For example, the diagram in the example after equation \eqref{def:xbasis} is denoted $x_{\{2,3,4\},\{1,2,4\}}$. In this notation, the isomorphism in (\ref{twoisomorphisms}) is given explicitly on basis elements by $x_{{}_{S,T}} \leftrightarrow x_{{}_{S,U}} \leftrightarrow v_S$. 

Inside of $W^{n,k}$ we have found ${n \choose k}$ copies of the $P_n$-invariant subspaces $W^{n,k}_T$ (one for each choice of $T$), and each of these is  isomorphic to $V^n_k$. Thus we have explicitly constructed the decompositions in part (a) of the following theorem.  Part (b) follows from the fact that every irreducible module must appear as a component in the regular representation.

\begin{thm}  \label{thm:regularrepresentation}
\begin{enumerate}
\item[(a)] The decomposition of $\CC P_n$ into $P_n$-invariant subspaces is given by
$$
\CC P_n =  \bigoplus_{k=0}^n W^{n,k}
= \bigoplus_{k=0}^n \bigoplus_{\vert T\vert = k} W^{n,k}_T  \cong  \bigoplus_{k=0}^n  {n \choose k} V^n_k.
$$
\item[(b)]  The set $\{ V_k^n \ \vert\ 0 \le k \le n\  \}$ is a complete set of irreducible  $\CC P_n$-modules.
\end{enumerate}
\end{thm}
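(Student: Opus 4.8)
The plan is to prove part (a) by organizing the basis $\{x_d \mid d \in P_n\}$ into blocks indexed by the two invariants of the action, and to prove part (b) by feeding that decomposition into the Artin--Wedderburn machinery cited at the start of this section. Since $\{x_d \mid d \in P_n\}$ is already known to be a basis of $\CC P_n$, the first step is to partition it according to the two quantities that the action in (\ref{actiononx}) leaves unchanged: the rank $k = \rank(a)$ and the bottom set $T = \beta(a)$. Grouping by rank gives $\CC P_n = \bigoplus_{k=0}^n W^{n,k}$, and this sum is automatically direct because the blocks $\{x_a \mid \rank(a)=k\}$ are pairwise disjoint subsets of a basis. I would then confirm that each $W^{n,k}$ is $P_n$-invariant, which is exactly the rank observation already recorded: $d x_a = 0$ unless $\rank(da)=\rank(a)$.

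For the finer decomposition I would refine each rank-$k$ block by the value $\beta(a)=T$. Because the action in (\ref{actiononx}) also fixes $\beta(a)$, each $W^{n,k}_T$ is $P_n$-invariant, and the sum $W^{n,k} = \bigoplus_{|T|=k} W^{n,k}_T$ is direct since it merely refines a disjoint partition of basis vectors. The isomorphism $W^{n,k}_T \cong V^n_k$ is precisely (\ref{twoisomorphisms}), coming from the identification $x_{S,T} \leftrightarrow v_S$. There are $\binom{n}{k}$ subsets $T$ of size $k$, so $W^{n,k} \cong \binom{n}{k}\,V^n_k$, and assembling over all $k$ yields part (a). A reassuring consistency check is the dimension count $\sum_{k} \binom{n}{k}^2 = \binom{2n}{n} = \dim \CC P_n$, confirming that the blocks exhaust the space with nothing left over.

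For part (b) I would observe that part (a) realizes the regular representation $\CC P_n$ as a direct sum of the modules $V^n_k$, each irreducible by Theorem \ref{ModelDecomposition}(b); in particular this certifies that $\CC P_n$ is semisimple. The cited Artin--Wedderburn theory then guarantees that every irreducible $\CC P_n$-module is isomorphic to a summand of the regular representation, hence to some $V^n_k$. Since the $V^n_k$ are pairwise non-isomorphic by Theorem \ref{ModelDecomposition}(a), the set $\{V^n_k \mid 0 \le k \le n\}$ is a complete and irredundant list of the irreducibles.

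I expect no single deep obstacle here, because the genuinely substantive work is already contained in Proposition \ref{prop:inclusionexclusion}: the inclusion--exclusion change of basis to the $x_d$ is what forces the second (``otherwise'') case to vanish and thereby splits the regular representation so cleanly. Given that, the remaining effort is essentially bookkeeping --- verifying that the two-stage block partition is genuinely direct and $P_n$-invariant at each stage --- together with one point of care in part (b): one must note that part (a) itself supplies the semisimplicity hypothesis before the completeness conclusion of the Artin--Wedderburn theorem may be invoked.
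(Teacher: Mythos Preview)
Your proposal is correct and follows essentially the same route as the paper: the paper also proves (a) by the discussion preceding the theorem---partitioning the basis $\{x_d\}$ first by rank and then by $\beta(a)=T$, invoking (\ref{actiononx}) for invariance and (\ref{twoisomorphisms}) for the identification with $V^n_k$---and dispatches (b) in one line by appealing to the Artin--Wedderburn fact that every irreducible occurs in the regular representation. Your added remarks (the dimension check and the explicit note that (a) supplies semisimplicity before invoking Artin--Wedderburn) are welcome clarifications but do not change the underlying argument.
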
 

In the previous theorem, the modules $W^{n,k}$ are the ``isotypic components"  which consist of a
sum of all of the irreducible subspaces that are isomorphic to $V^n_k$.  Notice also that the dimension and the multiplicity of $V^n_k$ in $\CC P_n$ is $n \choose k$.  Finally, since the irreducible modules that appear in $\CC P_n$ are exactly the $V^n_k$, we know that these form a complete set of irreducible modules as claimed in Theorem \ref{ModelDecomposition}.

\begin{prop}\label{matrixunits}  For subsets $S,T,U,V$ of $\{1, \ldots, n\}$ with $\vert S\vert  = \vert T\vert $ and $\vert U\vert =\vert V\vert $ we have
$$
x_{{}_{S,T}}  x_{{}_{U,V}} = \begin{cases}
x_{{}_{S,V}}, & \text{if $T=U$},\\
0, & \text{if $T\not=U$}.
\end{cases}
$$
\end{prop}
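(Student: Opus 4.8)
The plan is to write both factors in the $x$-basis, distribute the product, and then apply Proposition~\ref{prop:inclusionexclusion} to collapse everything into a single alternating sum that I can evaluate by the binomial identity $(1-1)^m=0^m$. First I would introduce the two underlying diagrams: let $d$ be the unique planar rook diagram with $\tau(d)=S$, $\beta(d)=T$, and let $e$ be the unique one with $\tau(e)=U$, $\beta(e)=V$, so that $x_{{}_{S,T}}=x_d$ and $x_{{}_{U,V}}=x_e$. Expanding the left factor via \eqref{def:xbasis} and using linearity of multiplication gives
\[
x_{{}_{S,T}}\,x_{{}_{U,V}}=\sum_{d'\subseteq d}(-1)^{\vert d\setminus d'\vert}\,(d'\,x_e),
\]
and Proposition~\ref{prop:inclusionexclusion} evaluates each inner term: $d'x_e=x_{d'e}$ when $\tau(e)=U\subseteq\beta(d')$, and $d'x_e=0$ otherwise.

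Next I would reparametrize the sum. Because $d$ is planar, a subdiagram $d'\subseteq d$ is determined by the subset $B=\beta(d')\subseteq T$ of bottom vertices it retains, with $\vert d\setminus d'\vert=\vert T\vert-\vert B\vert$; the term $d'x_e$ survives exactly when $U\subseteq B\subseteq T$. The crucial observation is that for every such $B$ the product $d'e$ is one and the same diagram: since $d'\subseteq d$ and $U\subseteq\beta(d')$, the edges of $d'$ emanating from $U$ agree with those of $d$, so $d'(U)=d(U)$, and hence $d'e$ is the planar diagram with $\beta(d'e)=V$ and $\tau(d'e)=d(U)$, independent of $B$. This lets the sum factor as
\[
x_{{}_{S,T}}\,x_{{}_{U,V}}=x_{{}_{d(U),V}}\sum_{U\subseteq B\subseteq T}(-1)^{\vert T\vert-\vert B\vert}.
\]

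It then remains only to evaluate the scalar. Writing $m=\vert T\setminus U\vert$ and re-indexing by $C=B\setminus U\subseteq T\setminus U$, the sum becomes $\sum_{C}(-1)^{m-\vert C\vert}=(1-1)^m=0^m$, equal to $1$ when $m=0$ and $0$ when $m>0$. When $T=U$ (so $m=0$) the only surviving subdiagram is $d'=d$, the scalar is $1$, and $d(U)=d(T)=\tau(d)=S$, yielding $x_{{}_{S,V}}$. When $T\neq U$ I would split into two cases: if $U\not\subseteq T$ the index set is empty and the product is $0$ (this already covers $\vert U\vert\neq\vert T\vert$, and also equal cardinalities with $U\neq T$), while if $U\subsetneq T$ then $m>0$ and the alternating sum vanishes. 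Either way the product is $0$, giving exactly the two cases of the statement.

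I expect the main obstacle to be the middle step: checking carefully that $d'e$ is independent of the chosen subdiagram $d'$, i.e.\ of $B$, since it is precisely this independence that pulls $x_{{}_{d(U),V}}$ out of the sum and reduces the whole computation to a scalar cancellation. Once that is secured, the remaining alternating sum is the same inclusion–exclusion cancellation $(1-1)^m$ already exploited in the proof of Proposition~\ref{prop:inclusionexclusion}, so the rest is routine bookkeeping of cardinalities and signs.
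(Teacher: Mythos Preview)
Your proof is correct. It differs from the paper's argument mainly in organization and in how the case $T\neq U$ is handled.

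The paper treats the two cases separately and more directly. For $T=U$, it simply notes that every proper subdiagram $a'\subsetneq a$ has $\beta(a')\subsetneq T=U=\tau(b)$, so Proposition~\ref{prop:inclusionexclusion} kills $a'x_b$ immediately and only the top term $a\,x_b=x_{ab}$ survives; there is no alternating sum to evaluate. For $T\neq U$, the paper does not compute at all: it picks $i$ in the symmetric difference of $T$ and $U$ and reruns the $p_i$ insertion trick from the proof of Proposition~\ref{prop:inclusionexclusion} (on the appropriate side) to conclude $x_{{}_{S,T}}x_{{}_{U,V}}=0$ in one stroke. Your route instead keeps all subdiagrams in play, verifies that $d'e$ is constant over the surviving index set, and then collapses the residual alternating sum via $(1-1)^m$. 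This is a bit more bookkeeping---the constancy check you flagged is exactly the extra work---but it has the virtue of being self-contained and of handling both cases uniformly without re-invoking the $p_i$ argument.
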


\begin{proof} If $T \not= U$, then there exists $i \in T$ with $i \not\in U$ or $i \in U$ with $i \not\in T$.  The same argument as in Proposition \ref{prop:inclusionexclusion} shows that $x_{{}_{S,T}} x_{{}_{U,V}} = 0$.  If $T = U$ then let $a,b \in P_n$ be the diagrams such that $x_{{}_{S,T}} = x_a$ and $x_{{}_{U,V}} = x_b$.  By \eqref{actiononx} we have that $a x_b = x_{ab}$ and $a' x_b = 0$ for every $a' \subseteq a$ with $a' \not= a$.  So by the definition of $x_a$ we see that $x_a x_b = x_{ab}$. 
\end{proof}

Proposition \ref{matrixunits}  tells us that the $x_{{}_{S,T}}$ behave just like the matrix units $E_{i,j}$ which have a 1 in row $i$ and column $j$ and 0 everywhere else.  This correspondence reveals the structure of the planar rook algebra, given in the following corollary.

\begin{cor}\label{matrixalgebra} 
$$
\CC P_n \cong \bigoplus_{k=0}^n {\bf Mat}\!\left( {n \choose k}, {n \choose k} \right),
$$
where ${\bf Mat}\!\left( m, m \right)$ is the algebra of all $m \times m$ complex matrices.
\end{cor}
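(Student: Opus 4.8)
The plan is to promote Proposition \ref{matrixunits} from a statement about products of the $x_{{}_{S,T}}$ into a full algebra isomorphism, by recognizing the $x_{{}_{S,T}}$ as the matrix units of the right-hand side. First I would record that, since each diagram $d \in P_n$ is determined by the pair $(\tau(d), \beta(d))$ of equal-cardinality subsets, and since $\{x_d \mid d \in P_n\}$ is a basis of $\CC P_n$, the set $\{x_{{}_{S,T}} \mid S, T \subseteq \{1,\dots,n\},\ |S| = |T|\}$ is a basis of $\CC P_n$. Grouping these basis vectors by the common cardinality $k = |S| = |T|$ partitions them into blocks of size $\binom{n}{k}^2$, and the count $\sum_{k=0}^n \binom{n}{k}^2 = \binom{2n}{n} = \dim \CC P_n$ confirms that the whole basis is accounted for.

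Next I would define the linear map $\phi \colon \CC P_n \to \bigoplus_{k=0}^n {\bf Mat}\!\left(\binom{n}{k}, \binom{n}{k}\right)$ by sending $x_{{}_{S,T}}$ to the matrix unit $E_{S,T}$ sitting in the $k$-th summand, where $k = |S| = |T|$ and the rows and columns of that block are indexed by the $\binom{n}{k}$ subsets of size $k$. Because $\phi$ carries a basis of $\CC P_n$ bijectively onto the standard matrix-unit basis of the target, and the dimensions agree by the count above, $\phi$ is a linear isomorphism.

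It then remains to check that $\phi$ respects multiplication, which is precisely the content of Proposition \ref{matrixunits} read through $\phi$. In the matrix algebra the units obey $E_{S,T} E_{U,V} = \delta_{TU} E_{S,V}$, and Proposition \ref{matrixunits} gives exactly the same rule for the $x_{{}_{S,T}}$. The one point worth checking carefully is that products landing in different blocks vanish: if $|T| = |S| = k$ and $|U| = |V| = k'$ with $k \neq k'$, then $T \neq U$ merely because the sets have different cardinalities, so $x_{{}_{S,T}} x_{{}_{U,V}} = 0$ on the left while $E_{S,T}$ and $E_{U,V}$ lie in different blocks on the right, forcing $E_{S,T} E_{U,V} = 0$. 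Thus $\phi$ is multiplicative on basis elements, hence an algebra homomorphism, and being bijective it is an algebra isomorphism; in particular it automatically sends the identity $\id$ of $\CC P_n$ to the identity of the direct sum. The main (and essentially only) obstacle is the bookkeeping of the block structure, and that is entirely dispatched by the cardinality observation just made.
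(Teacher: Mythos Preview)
Your proof is correct and is exactly the argument the paper has in mind: the text immediately before the corollary says only that ``the $x_{{}_{S,T}}$ behave just like the matrix units $E_{i,j}$,'' and you have simply written out the details of that correspondence using Proposition~\ref{matrixunits}. Your careful check of the cross-block case ($|T|\neq |U|$) is a nice touch that the paper leaves implicit.
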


\section{The Bratteli Diagram is Pascal's Triangle}

The binomial coefficients have appeared in a very natural way throughout the representation theory of $P_n$.  For example,  by comparing dimensions on both sides of the decomposition of $V^n$ in 
 Theorem \ref{ModelDecomposition}, we get 
\begin{equation}\label{BinomialIdentity1}
2^n = \sum_{k=0}^n {n \choose k}.
\end{equation}
By computing dimensions on both sides of the decomposition of $\CC P_n$ in Theorem  \ref{thm:regularrepresentation}, we have
\begin{equation}\label{BinomialIdentity2}
{2 n \choose n} = \sum_{k=0}^n {n \choose k}^2.
\end{equation}
By comparing dimensions of the decomposition of $V^n_k$ into irreducible modules for $P_{n-1}$ in 
(\ref{restrictionrule}), we get
\begin{equation}
{n \choose k} = {n-1 \choose k-1} + {n-1 \choose k}.
\end{equation}
These are well-known binomial identities.  For a beautiful discussion of combinatorial proofs of binomial identities such as this see \cite{BQ}. We can view the work in this article as representation-theoretic  interpretations of these binomial identities.

Pascal's triangle itself arises naturally through the representation theory of $P_n$. 
The {\it Bratteli diagram} (see for example \cite{GHJ})  for the tower $P_0 \subseteq P_1 \subseteq P_2 \subseteq \cdots $ is the infinite rooted tree whose vertices are the irreducible representations $V^n_k$ and whose edges correspond to the restriction rules from $P_n$ to $P_{n-1}$.  Specifically there is an edge from $V^n_k$ to $V^{n-1}_\ell$ if and only if $V^{n-1}_\ell$ appears as a summand when $V^n_k$ is viewed as a module for $P_{n-1}$.  According to the rules in \eqref{restrictionrule} we get the Bratteli diagram shown in Figure \ref{fig:Pascal}.  The dimensions of these modules gives Pascal's triangle.

\begin{figure}[h] 
   \centering
   \includegraphics[width=3.5in]{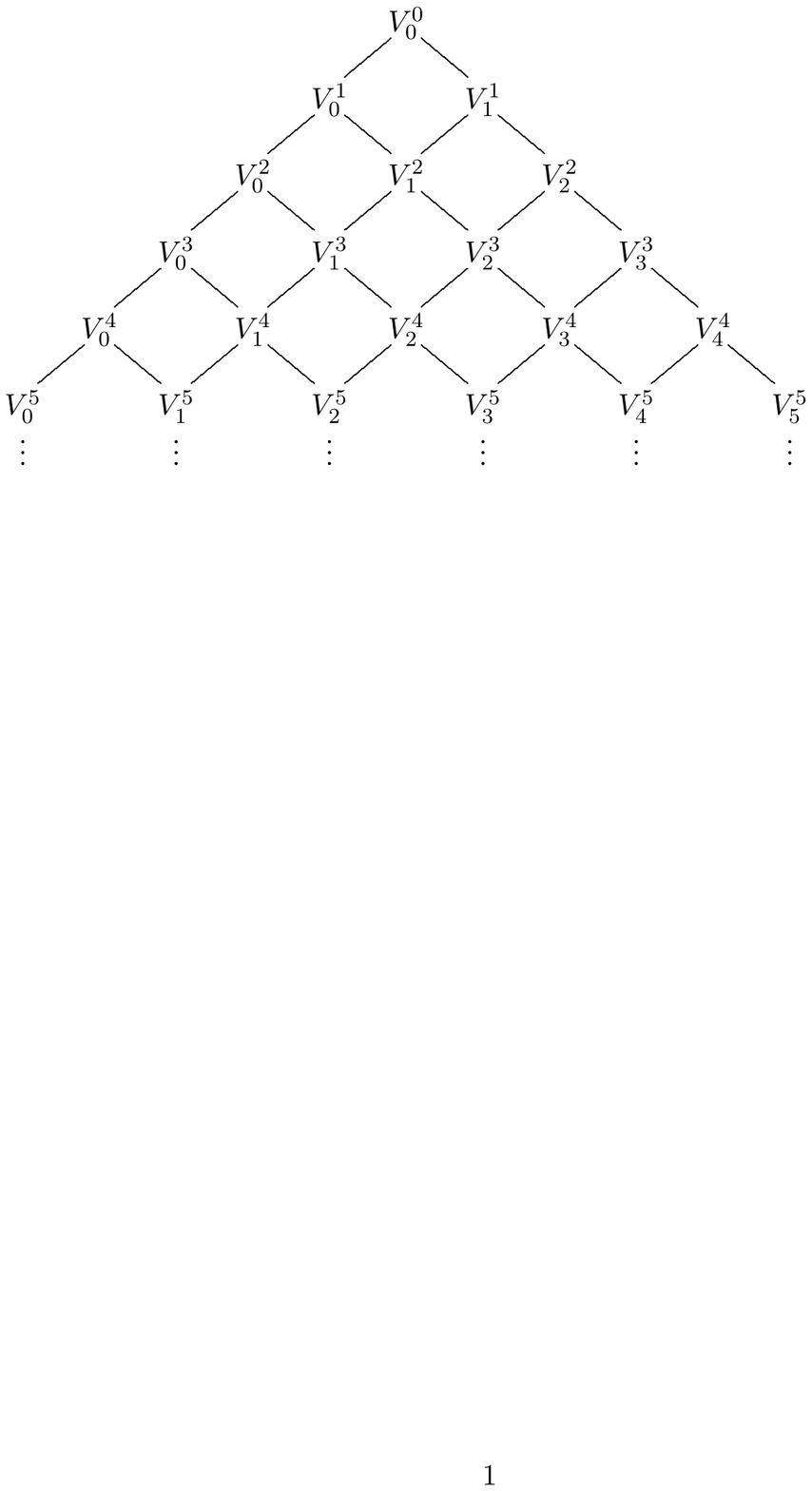} 
   \caption{The Bratteli diagram for the tower of containments  of planar rook monoids $P_0 \subseteq P_1 \subseteq P_2 \subseteq \cdots$.  Counting dimensions gives Pascal's triangle.}
   \label{fig:Pascal}
\end{figure}

\section{The Character Table is Pascal's Triangle}

For each irreducible representation $V^n_k$,  its character $\chi^n_k$ is the $\CC$-valued function that gives the trace of the 
$d \in P_n$ as a linear transformation on $V^n_k$. In this section, we show that the table of character values for $P_n$ is given by the first $n$ rows of Pascal's triangle.  The characters are linearly independent functions (over $\CC$), so from the character of any finite representation you can identify the isomorphism class of the representation.

For a planar rook diagram $d \in P_n$ we say that an edge in $d$ is {\it vertical} if it connects the $i$th vertex in the top row to the $i$th vertex in the bottom row for some $1 \le i \le n$.  We also say that a vertex that is not incident to an edge in $d$ is an {\it isolated} vertex.   Suppose that $d$ is a diagram such that its $i$th vertex in the top row is isolated.  As in Section 3, let $p_i$ be the diagram with obtained from the identity $\id$ by deleting the $i$th edge.  Then $p_i d = d$ and $d' = d p_i$ has the property that the $i$th vertex in both the top and bottom row is isolated.  For example,

$$
d =    {\beginpicture
\setcoordinatesystem units <0.2cm,0.15cm>         
\setplotarea x from 1 to 5, y from 0 to 3    
\linethickness=0.5pt                          
\put{$\bullet$} at 1 -1 \put{$\bullet$} at 1 2
\put{$\bullet$} at 2 -1 \put{$\bullet$} at 2 2
\put{$\bullet$} at 3 -1 \put{$\bullet$} at 3 2
\put{$\bullet$} at 4 -1 \put{$\bullet$} at 4 2
\put{$\bullet$} at 5 -1 \put{$\bullet$} at 5 2
\plot 1 -1 2 2  /
\plot 3 -1 4 2 /
\plot 5 -1 5 2 /
\endpicture} =
\begin{array}{l}
{\beginpicture
\setcoordinatesystem units <0.2cm,0.15cm>         
\setplotarea x from 1 to 5, y from 0 to 3    
\linethickness=0.5pt                          
\put{$\bullet$} at 1 -1 \put{$\bullet$} at 1 2
\put{$\bullet$} at 2 -1 \put{$\bullet$} at 2 2
\put{$\bullet$} at 3 -1 \put{$\bullet$} at 3 2
\put{$\bullet$} at 4 -1 \put{$\bullet$} at 4 2
\put{$\bullet$} at 5 -1 \put{$\bullet$} at 5 2
\plot 1 -1 1 2  /
\plot 2 -1 2 2  /
\plot 4 -1 4 2  /
\plot 5 -1 5 2  /
\endpicture}  = p_3\\
{\beginpicture
\setcoordinatesystem units <0.2cm,0.15cm>         
\setplotarea x from 1 to 5, y from 0 to 3    
\linethickness=0.5pt                          
\put{$\bullet$} at 1 -1 \put{$\bullet$} at 1 2
\put{$\bullet$} at 2 -1 \put{$\bullet$} at 2 2
\put{$\bullet$} at 3 -1 \put{$\bullet$} at 3 2
\put{$\bullet$} at 4 -1 \put{$\bullet$} at 4 2
\put{$\bullet$} at 5 -1 \put{$\bullet$} at 5 2
\plot 1 -1 2 2  /
\plot 3 -1 4 2 /
\plot 5 -1 5 2 /
\endpicture} = d
\end{array}
\qquad\hbox{and}\qquad
\begin{array}{r}
 d = {\beginpicture
\setcoordinatesystem units <0.2cm,0.15cm>         
\setplotarea x from 1 to 5, y from 0 to 3    
\linethickness=0.5pt                          
\put{$\bullet$} at 1 -1 \put{$\bullet$} at 1 2
\put{$\bullet$} at 2 -1 \put{$\bullet$} at 2 2
\put{$\bullet$} at 3 -1 \put{$\bullet$} at 3 2
\put{$\bullet$} at 4 -1 \put{$\bullet$} at 4 2
\put{$\bullet$} at 5 -1 \put{$\bullet$} at 5 2
\plot 1 -1 2 2  /
\plot 3 -1 4 2 /
\plot 5 -1 5 2 /
\endpicture}
 \\
p_3 = {\beginpicture
\setcoordinatesystem units <0.2cm,0.15cm>         
\setplotarea x from 1 to 5, y from 0 to 3    
\linethickness=0.5pt                          
\put{$\bullet$} at 1 -1 \put{$\bullet$} at 1 2
\put{$\bullet$} at 2 -1 \put{$\bullet$} at 2 2
\put{$\bullet$} at 3 -1 \put{$\bullet$} at 3 2
\put{$\bullet$} at 4 -1 \put{$\bullet$} at 4 2
\put{$\bullet$} at 5 -1 \put{$\bullet$} at 5 2
\plot 1 -1 1 2  /
\plot 2 -1 2 2  /
\plot 4 -1 4 2  /
\plot 5 -1 5 2  /
\endpicture}  \\ \end{array}
=   {\beginpicture
\setcoordinatesystem units <0.2cm,0.15cm>         
\setplotarea x from 1 to 5, y from 0 to 3    
\linethickness=0.5pt                          
\put{$\bullet$} at 1 -1 \put{$\bullet$} at 1 2
\put{$\bullet$} at 2 -1 \put{$\bullet$} at 2 2
\put{$\bullet$} at 3 -1 \put{$\bullet$} at 3 2
\put{$\bullet$} at 4 -1 \put{$\bullet$} at 4 2
\put{$\bullet$} at 5 -1 \put{$\bullet$} at 5 2
\plot 1 -1 2 2  /
\plot 5 -1 5 2 /
\endpicture} = d'
$$

\def\Tr{{\rm Tr}}
Now, the point of all this is that for any matrix trace $\Tr$ we have $\Tr(ab) = \Tr(ba)$, so in this case $\Tr(d) = \Tr(p_3 d) = \Tr(d p_3) = \Tr(d')$.  So by iterating this process, we see that for any matrix trace we have $\Tr(d) = \Tr(d')$ where $d'$ is the diagram $d$ with all of its non-vertical edges removed.

Furthermore, we can use the following trick to move all of the vertical edges to the left of the diagram.  In the picture below, we see that $d = RLd$ and $d' = LdR$ has the vertical edge moved one position to the left. Furthermore $\Tr(d) = \Tr(RLd) = \Tr(LdR) = \Tr(d')$.
$$
d =    {\beginpicture
\setcoordinatesystem units <0.2cm,0.15cm>         
\setplotarea x from 1 to 5, y from 0 to 3    
\linethickness=0.5pt                          
\put{$\bullet$} at 1 -1 \put{$\bullet$} at 1 2
\put{$\bullet$} at 2 -1 \put{$\bullet$} at 2 2
\put{$\bullet$} at 3 -1 \put{$\bullet$} at 3 2
\put{$\bullet$} at 4 -1 \put{$\bullet$} at 4 2
\put{$\bullet$} at 5 -1 \put{$\bullet$} at 5 2
\plot 1 -1 1 2  /
\plot 2 -1 2 2  /
\plot 4 -1 4 2  /
\plot 5 -1 5 2  /
\endpicture} =
\begin{array}{l}
{\beginpicture
\setcoordinatesystem units <0.2cm,0.15cm>         
\setplotarea x from 1 to 5, y from 0 to 3    
\linethickness=0.5pt                          
\put{$\bullet$} at 1 -1 \put{$\bullet$} at 1 2
\put{$\bullet$} at 2 -1 \put{$\bullet$} at 2 2
\put{$\bullet$} at 3 -1 \put{$\bullet$} at 3 2
\put{$\bullet$} at 4 -1 \put{$\bullet$} at 4 2
\put{$\bullet$} at 5 -1 \put{$\bullet$} at 5 2
\plot 1 -1 1 2  /
\plot 2 -1 2 2  /
\plot 3 -1 4 2  /
\plot 5 -1 5 2  /
\endpicture}  = R\\
{\beginpicture
\setcoordinatesystem units <0.2cm,0.15cm>         
\setplotarea x from 1 to 5, y from 0 to 3    
\linethickness=0.5pt                          
\put{$\bullet$} at 1 -1 \put{$\bullet$} at 1 2
\put{$\bullet$} at 2 -1 \put{$\bullet$} at 2 2
\put{$\bullet$} at 3 -1 \put{$\bullet$} at 3 2
\put{$\bullet$} at 4 -1 \put{$\bullet$} at 4 2
\put{$\bullet$} at 5 -1 \put{$\bullet$} at 5 2
\plot 1 -1 1 2  /
\plot 2 -1 2 2  /
\plot 3 2 4  -1  /
\plot 5 -1 5 2  /
\endpicture}  = L\\
{\beginpicture
\setcoordinatesystem units <0.2cm,0.15cm>         
\setplotarea x from 1 to 5, y from 0 to 3    
\linethickness=0.5pt                          
\put{$\bullet$} at 1 -1 \put{$\bullet$} at 1 2
\put{$\bullet$} at 2 -1 \put{$\bullet$} at 2 2
\put{$\bullet$} at 3 -1 \put{$\bullet$} at 3 2
\put{$\bullet$} at 4 -1 \put{$\bullet$} at 4 2
\put{$\bullet$} at 5 -1 \put{$\bullet$} at 5 2
\plot 1 -1 1 2  /
\plot 2 -1 2 2  /
\plot 4 -1 4 2  /
\plot 5 -1 5 2  /
\endpicture} = d
\end{array}
\qquad\hbox{and}\qquad
 \begin{array}{r}
L = {\beginpicture
\setcoordinatesystem units <0.2cm,0.15cm>         
\setplotarea x from 1 to 5, y from 0 to 3    
\linethickness=0.5pt                          
\put{$\bullet$} at 1 -1 \put{$\bullet$} at 1 2
\put{$\bullet$} at 2 -1 \put{$\bullet$} at 2 2
\put{$\bullet$} at 3 -1 \put{$\bullet$} at 3 2
\put{$\bullet$} at 4 -1 \put{$\bullet$} at 4 2
\put{$\bullet$} at 5 -1 \put{$\bullet$} at 5 2
\plot 1 -1 1 2  /
\plot 2 -1 2 2  /
\plot 3 2 4  -1  /
\plot 5 -1 5 2  /
\endpicture} \\
d= {\beginpicture
\setcoordinatesystem units <0.2cm,0.15cm>         
\setplotarea x from 1 to 5, y from 0 to 3    
\linethickness=0.5pt                          
\put{$\bullet$} at 1 -1 \put{$\bullet$} at 1 2
\put{$\bullet$} at 2 -1 \put{$\bullet$} at 2 2
\put{$\bullet$} at 3 -1 \put{$\bullet$} at 3 2
\put{$\bullet$} at 4 -1 \put{$\bullet$} at 4 2
\put{$\bullet$} at 5 -1 \put{$\bullet$} at 5 2
\plot 1 -1 1 2  /
\plot 2 -1 2 2  /
\plot 4 -1 4 2  /
\plot 5 -1 5 2  /
\endpicture}  \\
R={\beginpicture
\setcoordinatesystem units <0.2cm,0.15cm>         
\setplotarea x from 1 to 5, y from 0 to 3    
\linethickness=0.5pt                          
\put{$\bullet$} at 1 -1 \put{$\bullet$} at 1 2
\put{$\bullet$} at 2 -1 \put{$\bullet$} at 2 2
\put{$\bullet$} at 3 -1 \put{$\bullet$} at 3 2
\put{$\bullet$} at 4 -1 \put{$\bullet$} at 4 2
\put{$\bullet$} at 5 -1 \put{$\bullet$} at 5 2
\plot 1 -1 1 2  /
\plot 2 -1 2 2  /
\plot 3 -1 4 2  /
\plot 5 -1 5 2  /
\endpicture}  \\
\end{array}
=   {\beginpicture
\setcoordinatesystem units <0.2cm,0.15cm>         
\setplotarea x from 1 to 5, y from 0 to 3    
\linethickness=0.5pt                          
\put{$\bullet$} at 1 -1 \put{$\bullet$} at 1 2
\put{$\bullet$} at 2 -1 \put{$\bullet$} at 2 2
\put{$\bullet$} at 3 -1 \put{$\bullet$} at 3 2
\put{$\bullet$} at 4 -1 \put{$\bullet$} at 4 2
\put{$\bullet$} at 5 -1 \put{$\bullet$} at 5 2
\plot 1 -1 1 2  /
\plot 2 -1 2 2  /
\plot 3 -1 3 2  /
\plot 5 -1 5 2  /
\endpicture}  = d'
$$

By iterating this process, we see that the character value of $d$ will be the same as the character value on one of the diagrams,
$$
\pi_\ell = \underbrace{{\beginpicture
\setcoordinatesystem units <0.4cm,0.2cm>         
\setplotarea x from 1 to 5.5, y from 0 to 3    
\linethickness=0.5pt                          
\put{$\bullet$} at 1 -1 \put{$\bullet$} at 1 2
\put{$\bullet$} at 2 -1 \put{$\bullet$} at 2 2
\put{$\bullet$} at 3 -1 \put{$\bullet$} at 3 2
\put{$\cdots$} at 4 -1 \put{$\cdots$} at 4 2
\put{$\bullet$} at 5 -1 \put{$\bullet$} at 5 2
\plot 1 -1 1 2  /
\plot 2 -1 2 2  /
\plot 3 -1 3 2  /
\plot 5 -1 5 2  /
\endpicture}}_\ell
{\beginpicture
\setcoordinatesystem units <0.4cm,0.2cm>         
\setplotarea x from 1 to 5, y from 0 to 3    
\linethickness=0.5pt                          
\put{$\bullet$} at 1 -1 \put{$\bullet$} at 1 2
\put{$\bullet$} at 2 -1 \put{$\bullet$} at 2 2
\put{$\bullet$} at 3 -1 \put{$\bullet$} at 3 2
\put{$\cdots$} at 4 -1 \put{$\cdots$} at 4 2
\put{$\bullet$} at 5 -1 \put{$\bullet$} at 5 2
\endpicture}, \qquad 0 \le \ell \le n,
$$
where $\ell$ is the number of vertical edges in $d$. The set of diagrams $\{ \pi_\ell\ \vert \ 0 \le i \le n \}$ is analogous to a set of conjugacy class representatives in a group in the sense that any trace is completely determined by its value on one of these diagrams.

In the next theorem, we show that the trace of $d \in P_n$ on the representation $V^n_k$ is a binomial coefficient.  The proof is to count subsets  fixed by $d$.

\begin{thm} \label{thm:chars} For $0 \le k \le n$ and $d \in P_n$, the value of the irreducible character is given by
$$
\chi^n_k(d) = 
\begin{cases}
{\ell \choose k}, & \text{ if $k \le \ell$}, \\
0, & \text{ if $k > \ell$}, \\
\end{cases}
$$
where $\ell$ is the number of vertical edges in $d$.
\end{thm}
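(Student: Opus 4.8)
The plan is to compute the character directly from its definition as a trace, by identifying which diagonal entries of the matrix of $d$ acting on $V^n_k$ are nonzero. Working in the basis $\{v_S : |S| = k\}$ from \eqref{subsetrep}, the action \eqref{actiononsets} sends $v_S$ either to $v_{d(S)}$ (when $S \subseteq \beta(d)$) or to $0$. Hence the diagonal coefficient of $v_S$ in $d\,v_S$ equals $1$ precisely when $S \subseteq \beta(d)$ and $d(S) = S$, and equals $0$ otherwise. Summing over the diagonal, I would obtain
\begin{equation*}
\chi^n_k(d) = \#\{\, S \subseteq \{1,\ldots,n\} : |S| = k,\ S \subseteq \beta(d),\ d(S) = S \,\}.
\end{equation*}
So the character is literally a count of the $k$-element subsets fixed (as sets) by $d$, exactly as the remark preceding the theorem advertises.

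The heart of the argument is then to show that a subset $S \subseteq \beta(d)$ with $d(S) = S$ must in fact be fixed \emph{pointwise}, i.e., $d(s) = s$ for every $s \in S$. This is where planarity is essential: viewing $d$ as the partial bijection $\beta(d) \to \tau(d)$ introduced in Section 2, the no-crossing condition forces $d$ to be order-preserving (if $i < i'$ but $d(i) > d(i')$, the corresponding edges would cross). When $d(S) = S$, the restriction $d|_S$ is therefore an order-preserving bijection of the finite totally ordered set $S$ onto itself, and the only such bijection is the identity. Thus every $s \in S$ satisfies $d(s) = s$, which says precisely that $s$ is incident to a vertical edge of $d$. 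Consequently the subsets being counted are exactly the $k$-element subsets of the set of vertical vertices, a set of size $\ell$ by definition, and there are $\binom{\ell}{k}$ of them (and none when $k > \ell$), giving the claimed formula.

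The main obstacle is precisely this order-preserving step: in the full non-planar rook monoid $R_n$ the identical trace computation would count subsets merely \emph{permuted} by $d$, and the answer would fail to reduce to a single binomial coefficient. It is the absence of edge crossings that collapses the condition $d(S) = S$ to pointwise fixing, so I would take care to isolate and justify that reduction cleanly (either by the crossing argument above or by citing that $P_n$ consists of order-preserving partial permutations). As a consistency check and an alternative route, one may instead invoke the trace reduction carried out just before the theorem, which already shows $\chi^n_k(d) = \chi^n_k(\pi_\ell)$; since $\beta(\pi_\ell) = \{1,\ldots,\ell\}$ and $\pi_\ell$ acts as the identity there, one has $\pi_\ell\,v_S = v_S$ exactly when $S \subseteq \{1,\ldots,\ell\}$, again producing $\binom{\ell}{k}$ diagonal $1$'s. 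The direct count has the advantage of not relying on the preliminary normalization to $\pi_\ell$.
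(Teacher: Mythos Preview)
Your proof is correct, and you in fact present two complete arguments. The paper's own proof is your second one: it invokes the preliminary reduction $\chi^n_k(d)=\chi^n_k(\pi_\ell)$ obtained from $\mathrm{Tr}(ab)=\mathrm{Tr}(ba)$ in the discussion preceding the theorem, and then counts the $k$-subsets fixed by $\pi_\ell$ directly.

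Your primary argument takes a genuinely different route: rather than first normalizing to $\pi_\ell$, you work with an arbitrary $d$ and use planarity to show that any set $S$ with $d(S)=S$ is actually fixed pointwise, because an order-preserving bijection of a finite totally ordered set to itself must be the identity. This is a nice self-contained argument; it avoids the conjugation-style trace manipulations and instead isolates exactly the combinatorial fact about planar diagrams that makes the formula so clean. The paper's approach, on the other hand, has already done the work of identifying the ``class representatives'' $\pi_\ell$, so its proof of the theorem itself is shorter and requires no further appeal to order-preservation. Both are valid; yours makes the role of planarity explicit at the moment it is used, while the paper front-loads that role into the trace reduction.
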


\begin{proof}  The elements $d \in P_n$ permute (or send to 0) the vectors $v_S$ which span $V^n_k$.  The $v_S$-$v_S$ entry of the matrix of $d$ will be 1 if $d(S) = S$ and 0 otherwise.  This tells us that the character $\chi^n_k(d)$ gives the number of fixed points of $d$.  By our discussion above it suffices to let $d = \pi_\ell$.  Now, $\pi_\ell$ will fix $S$ if and only if $S \subseteq \{1,  \ldots, \ell \}$.  And for $v_S$ to be a basis element of $V^n_k$ it must be a subset of $\{1,  \ldots, n\}$ with cardinality $k$. Thus, the trace is the number of subsets of $\{1,  \ldots, \ell \}$ of cardinality $k$, or $\ell \choose k$, as desired.
\end{proof}

\section{Further Thoughts}

\noindent 
Here are a few more observations that make fun exercises.  

\begin{enumerate}
\item Let $\psi^n(d)$ denote the trace of $d \in P_n$ on the regular representation $\CC P_n$.  Then by a counting argument $\psi^n(d) = {n+\ell \choose \ell}$, where $\ell$ is the number of vertical edges in $d$.  Using the decomposition of the regular representation into irreducibles we arrive at the binomial identity
$$
\sum_{k=0}^n {n \choose k} \chi^n_k(d) = \sum_{k=0}^\ell {n \choose k} {\ell \choose k} = {n+\ell \choose \ell}.
$$

\item  In the $x_d$ basis, the irreducible character values are 
$$
\chi^n_k( x_d) = \begin{cases}
1, & \text{if $d$ has exactly $k$ vertical edges and no other edges}, \\
0, &  \text{otherwise}.
\end{cases}
$$

\item The center of $\CC P_n$ has a basis given by the elements
$$
z_\ell = \sum_{a} x_a, \qquad 0 \le \ell \le n,
$$
where the sum is over all diagrams $a$ with exactly $\ell$ vertical edges and no other edges.

\item The character ring $A_n$ of $\CC P_n$ is the algebra with basis given by the $n$ functions $\chi^n_k(\ell) = {\ell \choose k}$, for $0 \le k \le n$.   For $A_n$ these functions have domain $0 \le \ell \le n$.  Since the irreducible characters form a basis of this algebra, we can re-express the product of any two characters in terms of the basis. The structure constants for the character ring come from  the multinomial coefficients in the following polynomial identity  (see \cite[\S 1.4]{Ri})
$$
{x \choose i} {x \choose j} = \sum_{k={\rm max}(i,j)}^{i+j}  {k \choose i+j-k, k-i, k-j} { x \choose k }.
$$
In the application to $A_n$, the upper limit in the sum is taken to be $k=\max(i+j,n)$ since ${x \choose k} = 0$ for $k > n \ge x = \ell$. 
This gives the corresponding decomposition of the tensor product (see  \cite[\S 11]{CR} for an explanation of tensor products)  $V^n_i \otimes V^n_j = \oplus_{k}  {k \choose i+j-k, k-i, k-j} V^n_k$.  This illustrates that even the multiplicative structure of the binomial coefficients is captured in the representation theory of $P_n$. 
 
 \item Some of the first examples of diagram algebras are the group algebra of the symmetric group, with a basis of permutation diagrams, and the Artin braid group, with a basis of braid diagrams.  The Brauer algebra was defined in the 1930s, and its planar version, the Temperley-Lieb algebra, is important in statistical mechanics. The papers \cite{CFS}, \cite{HR}, \cite{GL}, and the references therein, give definitions and examples of these and other diagram algebras.  The planar rook algebra was constructed to be a diagram algebra whose Bratteli diagram is Pascal's triangle.    A good project is to find algebras whose Bratteli diagrams match the lattice of other recursively defined integers such as the Stirling numbers or the trinomial numbers.
 
 \item The planar rook algebra also has representation theoretic importance as the centralizer algebra of the general linear group $G=GL_1(\CC) = \CC \setminus \{0\}$.  Let $V = V_0 \oplus V_1$ such that $V_0
 $ and $V_1$ are the 1-dimensional $G$-modules where $z (v_0 + v_1) = v_0 + z v_1$ for $v_i \in V_i$ and $z \in G$.  Then $\CC P_n  \cong \End_{G}(V^{\otimes n}),$ which is  the  algebra of  all endomorphisms of  the tensor product $V^{\otimes n}$ that commute with $G$ (i.e., $\CC P_n$ is the centralizer of $G$ on $V^{\otimes n}$).   This is analogous to classical Schur-Weyl duality, where the group algebra of the symmetric group $\CC S_n$ is the centralizer of $GL_k(\CC)$ on $W^{\otimes n}$, for $k \ge n$, where $W = \CC^k$ is the representation of $GL_k(\CC)$ by matrix multiplication on column vectors.  If we replace a simple tensor with the subset indexed by  the binary string in its subscripts --- for example $v_1 \otimes v_0 \otimes v_1 \otimes v_1 \otimes v_0 \Leftrightarrow 10110 \Leftrightarrow \{1,3,4\}$  --- then the action on simple tensors is the same as the action of $P_n$ on subsets in Section 2.
 
\end{enumerate}

\bigskip

\noindent {\bf ACKNOWLEDGEMENTS}.  We are grateful to Arun Ram who, at the 2004 AMS sectional meeting in Evanston, helped the second author  discover the planar rook monoid as an example of an algebra whose Bratteli diagram is Pascal's triangle.  We also thank Lex Renner for other important suggestions at that same meeting.

\end{document}